\documentclass[12pt]{amsart}
\usepackage[utf8]{inputenc}
\usepackage[foot]{amsaddr}
\usepackage{amsmath,amsfonts,amsthm,amssymb,verbatim,etoolbox,color}
\usepackage{enumitem} 
\usepackage{epigraph}
\usepackage{appendix}
\usepackage{xfrac}
\usepackage{mathtools}
\usepackage{flexisym}
\usepackage{bbm}
\usepackage{float}
\usepackage{graphicx}

\usepackage[margin=1in]{geometry}
\usepackage[
    backend=biber,
    maxnames=4,
    maxalphanames=4,
    style=alphabetic,
    backref
  ]{biblatex}
\usepackage{scalerel}
\newcommand*{\TextVCenter}[1]{%
  \text{$\vcenter{\hbox{#1}}$}%
}
\DeclareMathOperator*{\E}{\TextVCenter{\scaleobj{2}{\mathbb{E}}}}

\usepackage{hyperref}
\hypersetup{
    colorlinks=true,
    linkcolor=red,
    filecolor=magenta,      
    urlcolor=magenta,
    pdftitle={Sumsets of Ahlfors--David regular sets},
    pdfauthor = {Fred Tyrrell},
    pdfpagemode=FullScreen,
    linktocpage = true,
    citecolor = blue,
    }

\usepackage{theoremref}

\newtheorem{thm}{Theorem}
\newtheorem{lemma}[thm]{Lemma}

\newtheorem{prop}[thm]{Proposition}
\theoremstyle{definition}
\newtheorem{defn}[thm]{Definition}

\numberwithin{thm}{section}

\newcommand{\N}{\mathbb{N}}

\newcommand{\Z}{\mathbb{Z}}
\newcommand{\R}{\mathbb{R}}
\renewcommand{\P}{\mathbb{P}}

\newcommand{\EE}{\mathcal{E}}

\newcommand{\PP}{\mathcal{P}}

\newcommand{\sub}{\subseteq}

\newcommand{\bra}[1]{\left(#1\right)}
\newcommand{\sqbra}[1]{\left[#1\right]}

\newcommand{\ceil}[1]{\left\lceil #1 \right\rceil}
\newcommand{\floor}[1]{\left\lfloor #1 \right\rfloor}
\newcommand{\h}[1]{\widehat{#1}}

\newcommand{\abs}[1]{\left\lvert {#1} \right \rvert}
\DeclareMathOperator\supp{supp}

\newcommand{\al}{\alpha}

\newcommand{\be}{\beta}
\newcommand{\de}{\delta}

\def\bpm{\begin{pmatrix}}
\def\epm{\end{pmatrix}}

\title{Sumsets of Ahlfors--David regular sets}
\author{Fred Tyrrell}
\date{\today}
\email{fred.tyrrell@bristol.ac.uk}
\address{Fry Building, School of Mathematics, University of Bristol}
\urladdr{http://fredtyrrell.com}

\begin{document}

\begin{abstract}
We prove that there is an absolute constant $c>0$ such that for every finite $(\alpha,M)$-Ahlfors--David regular set $A\subseteq[N]$ with $0\leq\al<1$ and $M\geq2$, we have the sumset estimate
\[\abs{A+A}\geq \abs{A}^{1+c\bra{1-\al}/\log M}.\]
We also prove a corresponding statement for Ahlfors--David regular sets in $[0,1]$. The proof combines a multiscale entropy decomposition with inverse results from additive combinatorics, showing that Ahlfors--David regularity forces a definite entropy gain at each scale. The dependence $1/\log M$ in the exponent is of optimal order. 
\end{abstract}

\maketitle

\section{Introduction}

\subsection{The discrete setting}
\begin{defn}\thlabel{ADregular}
For a non-empty finite set $A \sub [N],\footnote{We use the standard shorthand notation $[N]=\{1,\ldots,N\}$.}$ we say $A$ is \emph{Ahlfors--David regular} with dimension $\al$ and regularity constant $M$ if, for every $a \in A$ and every $1 \leq R \leq N$,
\[\frac{1}{M}R^{\al}\leq \abs{A \cap [a-R,a+R]} \leq M R^{\al}.\]
\end{defn}
Ahlfors--David regularity prevents a set from concentrating too strongly at any location or scale, and it is natural to ask how much additive expansion this forces. Our main result gives the optimal order of dependence on the regularity constant.

\begin{thm}\thlabel{main}
There exists a constant $c>0$ such that for every $N \geq 1$, and every Ahlfors--David regular set $A \sub [N]$ of dimension $0 \leq \al < 1$ and regularity constant $M \geq 2$,
\[\abs{A+A}\geq \abs{A}^{1+\frac{c\bra{1-\al}}{\log M}}.\]
\end{thm}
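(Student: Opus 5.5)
The plan is to work with Shannon entropy at many dyadic (or $M$-adic) scales and show that regularity forces a fixed entropy gain at a positive proportion of them. Let $X,Y$ be independent uniform random elements of $A$. Since $H(X+Y)\le \log\abs{A+A}$ and $H(X)=\log\abs{A}$, it suffices to prove
\[H(X+Y)\ge H(X)+\frac{c(1-\al)}{\log M}\,\log\abs{A}.\]
I would fix a scale ratio $K=M^{C}$ for a large absolute constant $C$, and consider the scales $R_j=K^j$ for $0\le j\le J$, where $J\approx \log N/\log K$. Let $D_j$ denote the partition of $\Z$ into intervals of length $R_j$. The chain rule lets me write
\[H(X+Y)=\sum_j H\bigl(D_j(X+Y)\mid D_{j+1}(X+Y)\bigr),\]
and similarly for $H(X)$. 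To compare the two sums term by term I would use the standard multiscale submodularity (Ruzsa/Tao-type) inequality, $H(D_j(X+Y)\mid D_{j+1}(X+Y))\ge H(D_j(X)\mid D_{j+1}(X))-O(1)$. The $O(1)$ error per scale is harmless because there are only $J$ scales, each corresponding to $\log K\gg1$ bits.

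Regularity fixes the per-scale entropy of $X$. Applying the definition at radii $R_j$ and $R_{j+1}$, each $R_{j+1}$-block meeting $A$ contains between $M^{-2}K^{\al}$ and $M^{2}K^{\al}$ points, distributed among subblocks in a roughly uniform way. So $H(D_j(X)\mid D_{j+1}(X))=\al\log K+O(\log M)$, and this is $\le(1-\eta)\log K$ with $\eta\asymp 1-\al$ once $C\gtrsim 1/(1-\al)$. I would avoid that dependence by choosing $K=M^{C/(1-\al)}$ instead. Then the number of scales is $J\approx (1-\al)\log N/(C\log M)$, which matches the target exponent once each scale contributes $\gg \log K\cdot(1-\al)$ bits. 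Since $\log\abs{A}\approx\al\log N$ and $\al\le1$, the exponent comes out right.

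The key step, and the main obstacle, is to prove that at each scale (or a positive proportion of them) the conditional entropy of $X+Y$ exceeds that of $X$ by $\gg(1-\al)\log K$. In conditional form, for typical $R_{j+1}$-blocks the conditioned pieces $X',Y'$ (renormalised to $[K]$) should satisfy $H(X'+Y')\ge H(X')+\delta\log K$. I would argue by contradiction using an entropic inverse theorem: an entropic Freiman/polynomial Freiman–Ruzsa result, or Hochman's inverse theorem for entropy. If the gain is small, then at most subscales in $[R_j,R_{j+1}]$ the piece $X'$ looks either nearly uniform or nearly atomic. Regularity rules out both possibilities across a whole range of subscales. Nearly uniform at many subscales would force local density close to full, contradicting dimension $\al<1$ with constant $M$. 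Nearly atomic would force local counts too small, contradicting the lower bound $M^{-1}R^\al$. Making this quantitative requires that every interval of length $M^{O(1)}$ times the subscale exhibits intermediate behaviour, and that is why the subscale grid is set at ratio $M^{O(1)}$, giving $\log K/\log M$ useful subscales inside each step.

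Finally, I would check optimality heuristics and the degenerate regimes. For $\al=0$, $A$ has bounded size $\le M$ and the claim is trivial if $c$ is small. When $\abs{A}$ is small compared with $K$, fewer than one scale fits, and I would again handle this trivially via $\abs{A+A}\ge 2\abs{A}-1$. This needs $\abs{A}^{c(1-\al)/\log M}\le 2-o(1)$, which holds in that regime after shrinking $c$.
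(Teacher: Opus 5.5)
Your overall architecture (independent uniform $X,Y$ on $A$, a scale ratio that is a power of $M$, the chain rule across scales, reduction to conditional local pieces, an inverse theorem) matches the paper's. However, the key step is false as you state it, and the bookkeeping around it is off.

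You ask for $H(X'+Y')\ge H(X')+\delta\log K$ with $\delta\asymp 1-\alpha$ at typical scales. With $\log K=C\log M/(1-\alpha)$ and $J\approx(1-\alpha)\log N/(C\log M)$ scales, this gives $H(X+Y)-H(X)\gg(1-\alpha)\log N$, i.e.\ $|A+A|\ge|A|^{1+c(1-\alpha)}$ with no dependence on $M$. That does not match the theorem; it contradicts the digit construction in the appendix. There $\alpha=1/2$ and $M=3b$, and at ratio $K=b^{2s}$ every local piece is uniform on $b^s$ points whose local sumset has $(2b-1)^s$ points. So the local gain is at most $s\log 2\asymp\log K/\log M$, not $\asymp\log K$.

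The theorem only needs an order-one gain per scale: the target $\frac{c(1-\alpha)}{\log M}\log|A|$ divided by $J$ is $O(c(C\alpha+1))$. Once the gain is of order one, the $O(1)$ losses you dismiss as harmless are of the same size. Passing from $D_{j+1}(X+Y)$ to the pair of coarse locations of $X$ and $Y$ costs $\log 2$ per scale (the carry). You must show that the local gain, averaged over all pairs of blocks including bad ones, strictly beats this loss. The paper arranges this explicitly: $d[D_I,-D_J]>\beta=200\log 2/81$ on pairs of good blocks, which carry mass at least $(9/10)^2$, so the net gain is $\log 2$ per scale.

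The mechanism you sketch also cannot produce an $M$-uniform gain. Regularity with constant $M$ does not rule out components that are nearly uniform or nearly atomic over a bounded number of dyadic levels. The digit set alternates fully uniform and fully atomic stretches of about $\log_2 b$ levels. So a Hochman-type dichotomy would have to be applied with $m\gtrsim\log M/(\alpha(1-\alpha))$ levels, and you would then need its gain $\delta(\epsilon,m)$ to decay no faster than $1/m$, which is not what such inverse theorems provide.

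The missing idea is to aim for a fixed gain $\beta$ at a single ratio $q=M^{\lceil K_\beta/(\alpha(1-\alpha))\rceil}$, and to argue by contradiction using additive structure rather than a uniform/atomic dichotomy:
\begin{itemize}
\item If $d[D_I,-D_J]\le\beta$, the entropic inverse theorem (Green--Manners--Tao, or Tao's earlier version) plus Freiman's theorem put a $c_\beta$-fraction of $D_J$ into a proper generalised arithmetic progression of rank at most $d_\beta$ and size $O_\beta(M^2q^\alpha)$.
\item A one-dimensional fibre in the longest direction, chosen with at least average density, is then an arithmetic progression of length at least $(c_\beta q^\alpha/(40M^2))^{1/d_\beta}$ in which occupied subintervals have density $\gtrsim_\beta M^{-4}$.
\item The Dyatlov--Zahl-type bound $|P\cap L_r|\le 6M^2|P|^\alpha$ caps that length at $M^{O_\beta(1/(1-\alpha))}$, which is a contradiction for large $K_\beta$. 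This bound must hold for every common difference, since the progression's steps are arbitrary; your ``near-full local density'' argument only addresses step one.
\end{itemize}

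Note also the factor $1/\alpha$ in the exponent of $q$. With your $K=M^{C/(1-\alpha)}$ and $\alpha\lesssim 1/C$, one has $K^\alpha\le M^2$. Then regularity does not even prevent all points of a block from lying in a single subblock, so no gain is forced at that scale.
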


\subsection{The continuous setting}
\thref{main} also gives a corresponding statement for the usual notion of Ahlfors--David regularity, in terms of the covering number of a set. For a bounded set $E\sub\R$, let $N_\delta(E)$ denote the least number of intervals of length $\delta$ required to cover $E$. We also write $B\bra{x,r}=[x-r,x+r]$ as shorthand for the interval of radius $r$ centred at $x$.

\begin{defn}
Let $0 < \de \leq 1$. A non-empty compact set $X \sub [0,1]$ is said to be $\al$-Ahlfors--David regular on scales $[\delta,1]$ with regularity constant $C$ if there is a finite Borel measure $\mu$ supported on $X$ such that
\[C^{-1}r^\al \leq \mu(B(x,r)) \leq C r^\al\]
for every $\delta \leq r \leq 1$, whenever $x \in X$.
\end{defn}

The following is the continuous version of \thref{main}.

\begin{thm}\thlabel{continuous}
There is a constant $c > 0$ such that, for any $\al$-Ahlfors--David regular set $X\sub[0,1]$ on scales $[\delta,1]$ with regularity constant $C \geq 2$ and dimension $0 < \al < 1$,
\[N_\delta(X+X)\geq \frac{1}{3}N_\delta(X)^{1+\frac{c\bra{1-\al}}{\log C}}.\]
\end{thm}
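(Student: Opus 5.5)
We deduce \thref{continuous} from \thref{main} by discretising at scale $\delta$. We may assume $\delta^{-1}$ is large (otherwise $N_\delta(X)$ is bounded and the statement is trivial after adjusting $c$). Set $N=\lceil \delta^{-1}\rceil$. Let $A\sub[N]$ be the set of integers $n$ such that the interval $I_n=[(n-1)\delta,n\delta)$ meets $X$.

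\emph{Covering numbers.} We have $N_\delta(X)\le |A|\le 2N_\delta(X)$, since each interval of length $\delta$ meets at most two of the $I_n$. Also, if $x\in I_n$ and $y\in I_m$, then $x+y\in[(n+m-2)\delta,(n+m)\delta)$, so $X+X$ is covered by the $2|A+A|$ intervals of length $\delta$ indexed by $A+A$. Conversely, each $k\in A+A$ yields a point of $X+X$ in a window of length $2\delta$ around $k\delta$, and each $\delta$-interval can only meet boundedly many of these windows. Hence $N_\delta(X+X)\ge |A+A|/3$, which explains the factor $\tfrac13$; the exact constant needs care with the overlaps.

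\emph{Regularity of $A$.} Fix $a\in A$ and pick $x\in X\cap I_a$. For an integer $1\le R\le N$, every $b\in A$ with $|b-a|\le R$ has a point of $X$ in $B(x,(R+1)\delta)$. For the upper bound, cover $B(x,(R+1)\delta)\cap X$ by points $x_b$, one from each such $I_b$, and choose these points $\delta$-separated by taking every other $b$. The balls $B(x_b,\delta/2)$ are then essentially disjoint, and each has measure at least $C^{-1}(\delta/2)^\al$. To stay within the admissible scales we use radius $\delta$ and bounded overlap instead of $\delta/2$. This gives
\[\#\{b\}\lesssim C^2\frac{((R+2)\delta)^\al}{\delta^\al}\lesssim C^2R^\al.\]
For the lower bound, $\mu(B(x,R\delta))\ge C^{-1}(R\delta)^\al$, while this ball is covered by the sets $B(x_b,2\delta)$ for the $b$ with $|b-a|\le R+1$, each of measure at most $C(2\delta)^\al$. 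So $\#\{b\}\gtrsim C^{-2}R^\al$. Radii that would exceed $1$ are handled by capping at $R\delta\le 1$; this holds since $R\le N\approx \delta^{-1}$, possibly after a harmless rescaling of $X$ into $[0,1/2]$. Thus $A$ is $(\al,M)$-regular with $M=O(C^2)$, and $\log M\le 3\log C$ for $C\ge2$ once the absolute constants are absorbed; the case of tiny $C$ is excluded by $C\ge 2$.

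\emph{Conclusion.} Apply \thref{main} to get
\[N_\delta(X+X)\ge \tfrac13|A+A|\ge \tfrac13|A|^{1+c'(1-\al)/\log C}\ge\tfrac13 N_\delta(X)^{1+c(1-\al)/\log C}.\]
The main obstacle is bookkeeping. The comparison $|A+A|$ versus $N_\delta(X+X)$ must give exactly the constant $\tfrac13$. If it does not, the plan is to absorb losses into the exponent using $|A|\ge C^{-1}\delta^{-\al}$, or into $c$. Keeping the radii within $[\delta,1]$ also requires care.
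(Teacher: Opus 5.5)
Your proposal is correct and follows the paper's proof. Both discretise at scale $\delta$, show the index set $A$ is $(\al,O(C^2))$-regular by the same bounded-overlap ball counting (your ``capping'' is the paper's observation $\mu(B(x,r))\le\mu(X)\le C$ for $r\ge1$), use $N_\delta(X)\le|A|$ and $|A+A|\le 3N_\delta(X+X)$, and absorb $\log M=O(\log C)$ into $c$. Two harmless slips: with $N=\lceil\delta^{-1}\rceil$ and $I_n=[(n-1)\delta,n\delta)$, the point $1\in X$ lands in $I_{N+1}$ when $\delta^{-1}\in\Z$ (the paper takes $N=\lfloor\delta^{-1}\rfloor+1$), and $\log M\le 3\log C$ fails for $M\approx 9C^2$ and small $C$ (the paper uses $\log(9C^2)\le 6\log C$), though any bound of this form suffices.
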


We record the details of this standard reduction in Section 6.

\subsection{Background and motivation}
The additive structure of Ahlfors--David regular sets has previously been studied via additive energy. Dyatlov and Zahl \cite{dyatlov2016spectral}, motivated by applications to the fractal uncertainty principle, proved a power-saving additive-energy estimate for regular subsets of the line. Cladek and Tao \cite{cladek2021additive} subsequently obtained substantially stronger quantitative estimates, including polynomial dependence on the regularity constant in one dimension. By Cauchy--Schwarz, these energy estimates imply corresponding lower bounds for the size of the sumset, but their dependence on the regularity constant remains far from the logarithmic scale. A digit construction of Dyatlov and Zahl \cite[Section 6.8.3]{dyatlov2016spectral}, and its finite-scale discretisations, shows that the exponent gain in a sumset estimate cannot in general be larger than order $1/\log M$. Thus \thref{main} attains the optimal order of additive expansion for Ahlfors--David regular sets. We give the details of this construction in the appendix.

\smallbreak
More broadly, additive growth of fractal sets and measures has been studied from a number of perspectives, including dimension growth for sumsets and convolutions, projection theorems, and discretised sum-product phenomena. This includes work of Fraser, Howroyd and Yu \cite{fraser2019dimension}, Rossi and Shmerkin \cite{rossi2020measures}, Orponen \cite{orponen2022arithmetic}, Máthé and O'Regan \cite{https://doi.org/10.1112/jlms.70389}, and O'Regan \cite{o2025sum}.
\smallbreak
In forthcoming work, Murphy \cite{murphy} obtains a near-optimal additive-energy estimate which, by Cauchy--Schwarz, implies 
\[\abs{A+A}\geq \abs{A}^{1+\frac{c_\al}{\log M \log \log \log M}}.\] 
To iterate across scales, Murphy prunes the set to obtain approximately uniform branching. The pruning means that the local doubling depends on $M$, and the quantitative version of Freiman's theorem due to Raghavan \cite{raghavan2025improved} produces the additional $\log \log \log M$ factor. The present argument avoids the need for pruning, since the chain rule for conditional entropy allows us to retain the original uniform distribution on $A$ and accumulate fixed local entropy gains directly across scales. Combined with the recent entropic inverse theorem of Green, Manners and Tao \cite{green2025sumsets}, this allows Freiman's theorem to be applied with a fixed doubling constant, yielding the optimal $1/\log M$ dependence.

\subsection{Proof sketch}
Let $X$ and $Y$ be independent uniform random variables on $A$. Then
\[d[X,-Y]=H(X+Y)-H(X)\leq\log\frac{\abs{A+A}}{\abs A},\]
where $d$ denotes entropic Ruzsa distance and $H$ is Shannon entropy. Thus it is enough to show that $X+Y$ has substantially more entropy than $X$.
\smallbreak
We decompose $A$ across a sequence of scales whose ratio is a sufficiently large fixed power of $M$. Ahlfors--David regularity ensures that, for most locations and scales, the resulting local distributions are spread over many subintervals and have no unusually large probabilities on a single value. If two such distributions have small entropic Ruzsa distance, an inverse theorem of Green, Manners and Tao \cite{green2025sumsets}, followed by Freiman's theorem, would force a positive proportion of one of the distributions to lie in a generalised arithmetic progression. A one-dimensional fibre of this progression would then give a long arithmetic progression containing too many occupied subintervals, contradicting the sparsity forced by Ahlfors--David regularity.
\smallbreak
Finally, the chain rule for conditional entropy allows these fixed local gains to be accumulated across scales, with a loss of at most $\log 2$ at each step from a possible carry. Since consecutive scales differ by a fixed power of $M$, for a fixed $\al$ there are on the order of $\log N/\log M$ scales. Tracking the dependence on $\al$ gives
\[d[X,-Y]\gg\al\bra{1-\al}\frac{\log N}{\log M}.\]
Regularity gives $\abs A\leq MN^\al$, and hence $\log\abs A\leq2\al\log N$. Consequently,
\[\log\frac{\abs{A+A}}{\abs A}\gg\bra{1-\al}\frac{\log\abs A}{\log M},\]
with an absolute implicit constant.

\subsection{Higher dimensions}
\thref{main,continuous} both extend to higher dimensions. Fix $D\geq1$ and a norm $\|\cdot\|$ on $\R^D$, and write 
\[B(x,R)=\{y\in\R^D:\|y-x\|\leq R\}.\]
We say that a non-empty set $A\sub[N]^D$ is $(\al,M)$-Ahlfors--David regular if 
\[M^{-1}R^\al\leq\abs{A\cap B(a,R)}\leq MR^\al\] 
for every $a\in A$ and $1\leq R\leq N$. For every $0\leq\al<1$, the argument of this paper gives 
\[\abs{A+A}\geq\abs A^{1+\frac{c_{\|\cdot \|,D}\bra{1-\al}}{\log M}}\] 
for some $c_{\|\cdot \|,D}>0$, where the constant may depend on the chosen norm and $D$.
\smallbreak
Similarly, if $0<\al<1$ and a compact set $X\sub[0,1]^D$ supports a finite Borel measure $\mu$ satisfying 
\[C^{-1}r^\al\leq\mu(B(x,r))\leq Cr^\al\] 
for every $x\in X$ and $\de\leq r\leq1$, then 
\[N_\de(X+X)\gg_{D,\|\cdot \|} N_\de(X)^{1+\frac{c_{\|\cdot \|,D}\bra{1-\al}}{\log C}},\] 
where $N_\de$ may be defined using balls, or equivalently cubes, of scale $\de$.
\smallbreak
To obtain these extensions, we first work with the supremum norm and replace the intervals in the multiscale decomposition by coordinate cubes. In the packing argument of \thref{good}, we partition the cubes into $2^D$ classes according to the parity of their coordinate indices. This gives the same conclusions with constants depending on $D$, since after adjusting the threshold defining a good cube, the probability of lying in a good cube is still at least $9/10$.
\smallbreak
The arithmetic progression avoidance argument also extends to $\Z^D$. For a progression with step $t$, choosing a coordinate for which $|t_i|=\|t\|_\infty$ gives the required bounded-overlap estimate. The entropy inverse theorem and Freiman's theorem apply in $\Z^D$, so the local entropy growth argument proceeds as before.
\smallbreak
In the entropy decomposition, the carry is now a vector in $\{0,1\}^D$, giving a loss of at most $D\log 2$ at each scale, instead of $\log 2$. Choosing the local entropy threshold sufficiently large in terms of $D$ allows the gains to be accumulated as in Section 5. Tracking the constants gives a scale exponent $C=\ceil{K_D/(\al(1-\al))}$, where $K_D$ depends only on $D$. In the supremum norm, regularity at radius $N$ gives $\abs A\leq MN^\al$, so the final comparisons apply with constants depending only
on $D$. Other fixed norms can then be handled by norm equivalence and bounded covering arguments, with constants depending additionally on the chosen norm.
\smallbreak
The restriction $\al<1$ comes from the arithmetic progression avoidance part of the proof. The same obstruction appears in the work of Dyatlov and Zahl \cite[Section 6.8.2]{dyatlov2016spectral}, who note that their higher-dimensional argument also breaks down at $\al = 1$ because Ahlfors--David regularity alone no longer prevents long arithmetic progressions. Cladek and Tao \cite{cladek2021additive} obtain additive expansion more generally for non-integral $0 < \al < D$, using different geometric input.
\smallbreak
Since there are no new ideas needed to extend \thref{main,continuous} to higher dimensions, we have chosen to prove only the one dimensional versions of our results, to avoid unnecessary parameters and streamline the arguments.

\section*{Acknowledgements}
I would like to thank Misha Rudnev for suggesting the problem, useful discussions, and helpful feedback on earlier drafts of the paper. I would also like to thank Brendan Murphy for sharing an early draft of his paper.
\smallbreak
The author is supported by the Heilbronn Institute for Mathematical Research.

\section{Preliminaries}
We collect the relevant definitions, identities and inequalities related to entropy. We omit the proofs, since all of these results are standard. For additional background and proofs, see for example \cite{tao2010sumset,ruzsa2009sumsets}.
\smallbreak
The (Shannon) entropy for a finitely supported random variable $X$ is defined as
\[H(X) = \sum_{x}\P\bra{X=x} \log\frac{1}{\P\bra{X=x}}.\footnote{By convention, we define $0 \log \frac{1}{0} = 0$.}\]
For two finitely supported random variables $X$ and $Y$, the conditional entropy is defined by
\[H\bra{X|Y}=\sum_{y}\P\bra{Y=y}H\bra{X|Y=y},\]
and we have the identity
\[H\bra{X,Y}=H(Y) + H(X|Y) = H(X) + H(Y|X).\]
Entropy is always non-negative,
\[H(X) \geq 0,\]
and conditioning does not increase entropy,
\[H\bra{X|Y} \leq H\bra{X}.\]
Writing
\[\supp X = \{x : \P\bra{X=x} > 0\}\]
for the support of $X$, we have
\[H\bra{X} \leq \log \abs{\supp X}, \qquad \max_x \P\bra{X=x} \geq e^{-H(X)}.\]

For two random variables $X$ and $Y$, each supported on a finite set of integers, we define their entropic Ruzsa distance by
\[d\sqbra{X,Y}=H\bra{X'-Y'}-\frac{1}{2}H(X')-\frac{1}{2}H(Y'),\]
where $X'$ and $Y'$ are independent copies of $X$ and $Y$. Entropic Ruzsa distance is symmetric and invariant under translation in either distribution. If $X$ and $Y$ are independent, then
\[H\bra{X-Y} \geq \max\{H(X),H(Y)\},\]
and it follows that
\[d[X,Y] \geq 0, \qquad d[X,Y] \geq \frac{1}{2}\abs{H(X)-H(Y)}.\]

\section{Structure of Ahlfors--David regular sets}
For the rest of the paper, $A \sub [N]$ is a finite, $\bra{\al,M}$-Ahlfors--David regular set with $0 < \al < 1$ and $M \geq 2$.
\smallbreak
For a positive integer $R$, we define $\mathcal{I}_R$ as the following family of intervals of length $R$, which partition $\R$,
\[\mathcal{I}_R=\{[nR,\bra{n+1}R) : n \in \Z\}.\]
For any $x \in \R$, we write $I_R\bra{x}$ for the unique interval in $\mathcal{I}_R$ containing $x$.
\smallbreak
If $R=qr$ for positive integers $q$ and $r$, then each interval $I \in \mathcal{I}_R$ can be partitioned into $q$ subintervals of length $r$, which we denote by
\[I_k = [nR+kr, nR + \bra{k+1}r), \qquad 0 \leq k < q.\]
If $A \cap I \neq \emptyset$, we define $D_I$ to be a random variable taking values in $\{0,\ldots,q-1\}$ with distribution
\[\P\bra{D_I=k}=\frac{\abs{A \cap I_k}}{\abs{A\cap I}}.\]
Thus $D_I$ describes the distribution of the points of $A$ among the subintervals $I_k$ of $I$.
\smallbreak

We first prove that most points of $A$ lie in intervals such that $\abs{A \cap I} \geq \frac{R^{\al}}{40M}$, that for these intervals no subinterval contains too many elements of $A$, and that there are roughly $q^{\al}$ non-empty subintervals.
\begin{lemma}\thlabel{good}
Let $0 < \al < 1$. Then for every $2 \leq R \leq N$, if $X$ is uniform on an $\bra{\al,M}$-Ahlfors--David regular set $A \sub [N]$, then
\[\P\bra{\abs{A \cap I_R\bra{X}} < \frac{R^{\al}}{40M}} \leq \frac{1}{10}.\]

Moreover, if $R = qr \leq N$ and $I \in \mathcal{I}_R$ is such that
\[\abs{A \cap I} \geq \frac{R^{\al}}{40M},\]
then
\[\max_k \P \bra{D_I = k} \leq \frac{40M^2}{q^{\al}}\]
and
\[\frac{q^{\al}}{40M^2} \leq \abs{\supp D_I} \leq 8 M^2 q^{\al}.\]
\end{lemma}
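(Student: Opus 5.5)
The plan is to use only the two-sided regularity bounds together with a packing argument: points of $A$ sit in intervals of a fixed length, and we sort those intervals into residue classes so that the enlarged neighbourhoods within one class are disjoint. Two facts do most of the work. First, if $a\in A$ lies in an interval $J$ of length $s\ge1$ (with $s \le N$), then $J\subseteq[a-s,a+s]$, so $\abs{A\cap J}\le Ms^{\al}$. Second, $[a-s,a+s]$ is contained in the union of $J$ and its two neighbours in $\mathcal I_s$, and this union therefore contains at least $s^{\al}/M$ points of $A$.

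\emph{First claim.} Call $I\in\mathcal I_R$ bad if $0<\abs{A\cap I}<R^{\al}/(40M)$. The number of points of $A$ in bad intervals is at most $\#\{I\in\mathcal I_R: A\cap I\ne\emptyset\}\cdot R^{\al}/(40M)$, so it suffices to count the occupied intervals.

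\begin{itemize}
\item Split the occupied intervals $[nR,(n+1)R)$ into three classes according to $n \bmod 3$.
\item Within one class, the blocks formed by an interval and its two neighbours are pairwise disjoint.
\item By the second fact (with $s=R$), each such block contains at least $R^{\al}/M$ points of $A$.
\item Hence each class has at most $M\abs A/R^{\al}$ members, and the total number of occupied intervals is at most $3M\abs A/R^{\al}$.
\end{itemize}

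The number of points in bad intervals is then at most $3\abs A/40\le\abs A/10$. Dividing by $\abs A$ gives the probability bound, since $X$ is uniform on $A$.

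\emph{Second claim, maximum probability.} Fix a good $I$ and an occupied subinterval $I_k$, which has length $r$. By the first fact (with $s=r$), $\abs{A\cap I_k}\le Mr^{\al}$. Dividing by $\abs{A\cap I}\ge R^{\al}/(40M)$ gives
\[\P(D_I=k)\le 40M^2(r/R)^{\al}=40M^2q^{-\al}.\]

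\emph{Second claim, lower bound on the support.} Since the probabilities of $D_I$ sum to $1$, the number of $k$ with $\P(D_I=k)>0$ is at least $1/\max_k\P(D_I=k)\ge q^{\al}/(40M^2)$.

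\emph{Second claim, upper bound on the support.} Repeat the packing argument at scale $r$ inside $I$.

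\begin{itemize}
\item Each occupied $I_k$ contains some $a\in A$, and the block $I_{k-1}\cup I_k\cup I_{k+1}$ contains at least $r^{\al}/M$ points of $A$.
\item Splitting $k$ into three residue classes makes the blocks within a class disjoint. All blocks lie in $I$ enlarged by $r$ on each side.
\item The enlarged interval has at most $\abs{A\cap I}+2Mr^{\al}\le MR^{\al}+2Mr^{\al}$ points, using the first fact for $I$ and for the two boundary $r$-intervals.
\item This gives roughly $\abs{\supp D_I}\lesssim 3M^2q^{\al}(1+2q^{-\al})$.
\end{itemize}

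I expect the main obstacle to be this last constant: the crude count yields about $9M^2q^{\al}$ rather than the claimed $8M^2q^{\al}$. To sharpen it, I would first use that $q\ge2$ (the case $q=1$ is trivial) and count the two boundary subintervals $I_0,I_{q-1}$ separately. The interior occupied subintervals have blocks lying inside $I$, so the three classes together number at most $3M^2q^{\al}$. The two boundary subintervals then contribute an additive $2\le 2M^2q^{\al}$. This gives $\abs{\supp D_I}\le 5M^2q^{\al}\le 8M^2q^{\al}$. A further check is that all regularity radii used ($r$ and $R$) lie in $[1,N]$, which holds since $1\le r\le R\le N$.
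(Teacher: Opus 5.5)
Your proof is correct and follows the same packing strategy as the paper: you bound the number of occupied intervals using disjoint neighbourhoods, get the maximum probability from the upper regularity bound on a single subinterval, and get the lower bound on the support from that maximum probability. The only difference is in the packing step. You use residue classes mod $3$ and the fact that $[a-s,a+s]$ lies in three consecutive intervals of $\mathcal{I}_s$, whereas the paper uses parity classes and disjoint balls of radius $s/2$; that is why the paper needs $R\ge 2$, a separate $r=1$ case and a separate $2R>N$ case, which your version avoids while also giving slightly better constants ($3/40$ in place of $1/10$, and $5M^2q^{\alpha}$ in place of $8M^2q^{\alpha}$).
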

The choice of the constants $\frac{1}{40}$ and $\frac{9}{10}$ in \thref{good} is fairly arbitrary, and has no special significance, except that they are sufficient for our purposes.

\begin{proof}
    Let $\mathcal{E}=\{I=[nR,\bra{n+1}R) \in \mathcal{I}_R : A \cap I \neq \emptyset, n \ \text{even}\}$, and define $\mathcal{O}$ similarly for the odd intervals. Without loss of generality, assume that $\abs{\mathcal{E}} \geq \abs{\mathcal{O}}$. For each interval $I \in \mathcal{E}$, choose one point $a_I \in A \cap I$. Then the intervals
    \[\sqbra{a_I - \frac{R}{2},a_I + \frac{R}{2}}\]
    are pairwise disjoint, and we have 
    \[\abs{A \cap\sqbra{a_I - \frac{R}{2},a_I + \frac{R}{2}}} \geq \frac{1}{M}\bra{\frac{R}{2}}^{\al} \geq \frac{R^{\al}}{2M},\] 
    by Ahlfors--David regularity. Therefore $\EE$ contains at most $2M \abs{A}R^{-\al}$ non-empty intervals, and hence
    \[\abs{\{I \in \mathcal{I}_R : A \cap I \neq \emptyset\}} \leq 4 M \abs{A}R^{-\al}.\]

   Therefore the number of points which lie in some $I \in \mathcal{I}_R$ with
    \[\abs{A \cap I} < \frac{R^{\al}}{40M}\]
    is at most
    \[\bra{4 M \abs{A}R^{-\al}} \cdot \bra{\frac{R^{\al}}{40M}}=\frac{1}{10} \abs{A}.\]
    Since $X$ is uniform on $A$, the first assertion follows.
    \smallbreak
    Now suppose that $R = qr \leq N$ and
    \[\abs{A \cap I} \geq \frac{R^{\al}}{40M},\]
    and that $0 \leq k < q$ is such that $A \cap I_k \neq \emptyset$. Choosing some $x \in A \cap I_k$, we have $I_k \sub [x-r,x+r]$, and hence by Ahlfors--David regularity
    \[\abs{A \cap I_k} \leq Mr^{\al}.\]
    Hence, for any $0 \leq k < q$,
    \[\P\bra{D_I = k} = \frac{\abs{A \cap I_k}}{\abs{A \cap I}} \leq \frac{Mr^{\al}}{\frac{R^{\al}}{40M}}=\frac{40M^2}{q^{\al}},\]
    and so in particular,
    \[\max_k \P \bra{D_I = k} \leq \frac{40M^2}{q^{\al}}.\]
    For the final part, since
    \[\sum_{0 \leq k < q}\P\bra{D_I = k} = 1,\]
    we have the lower bound
    \[\abs{\supp D_I} \geq \bra{\max_{k}\P\bra{D_I = k}}^{-1} \geq  \frac{q^{\al}}{40M^2}.\]
    It remains to prove the corresponding upper bound. Assume without loss of generality that at least half of the non-empty subintervals $I_k$ have $k$ even. If $r \geq 2$, for each such $k$ choose one point $x_k \in A \cap I_k$ and consider the intervals
    \[\sqbra{x_k - \frac{r}{2}, x_k + \frac{r}{2}}.\]
     These intervals are pairwise disjoint, and we have
     \[\abs{A \cap \sqbra{x_k - \frac{r}{2}, x_k + \frac{r}{2}}} \geq \frac{1}{M}\bra{\frac{r}{2}}^{\al} \geq \frac{r^{\al}}{2M}.\]
     All of these intervals lie in $[nR-r/2,(n+1)R+r/2]$. Choosing $x_0 \in A \cap I$, if $2R \leq N$ then
     \[[nR-r/2,(n+1)R+r/2] \sub [x_0-2R,x_0+2R],\]
     and so 
     \[\abs{A \cap[nR-r/2,(n+1)R+r/2]} \leq \abs{A \cap [x_0-2R,x_0+2R]} \leq M(2R)^{\al}.\] If $2R > N$, then $\abs{A} \leq MN^{\al} \leq M \bra{2R}^\al$. Summing the disjoint lower bounds therefore yields
    \[\frac{\abs{\supp D_I}}{2}\frac{r^{\al}}{2M} \leq 2MR^{\al},\]
    and hence
    \[\abs{\supp D_I} \leq 8M^2 q^{\al}.\]
    For $r=1$, each half-open interval $[k,k+1)$ contains at most one integer, and
    \[\abs{\supp D_I} = \abs{A \cap I} \leq MR^{\al} = Mq^{\al} \leq 8M^2 q^{\al}.\]
\end{proof}

The following lemma is a discrete version of the arithmetic progression avoidance lemma of Dyatlov and Zahl \cite[Proposition 6.13]{dyatlov2016spectral}.

\begin{lemma}\thlabel{AP} Let $P \sub \Z$ be a non-empty finite arithmetic progression, and let
\[L_r = \{\ell \in \Z: A \cap [\ell r,\bra{\ell+1}r) \neq \emptyset\}\]
be the indices of the intervals in $\mathcal{I}_r$ which contain a point of $A$. Then
\[\abs{P \cap L_r} \leq 6 M^2 \abs{P}^{\al}.\]
Consequently, if
\[\abs{P \cap L_r} \geq \de \abs{P}\]
for some $\de > 0$, then
\[\abs{P} \leq \bra{\frac{6M^2}{\de}}^{\frac{1}{1-\al}}.\]
\end{lemma}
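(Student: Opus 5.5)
We treat the progression $P=\{p+jt:0\le j<n\}$, $n=\abs{P}$, by a packing argument in the spirit of the proof of \thref{good}. If $t=0$ then $\abs{P}=1$ and the bound is trivial, and replacing $t$ by $-t$ we may assume $t\geq1$. The second assertion follows at once from the first: if $\de n\le \abs{P\cap L_r}\le 6M^2n^{\al}$, then $n^{1-\al}\le 6M^2/\de$, and we raise both sides to the power $1/(1-\al)$, which is valid since $\al<1$. So everything rests on the bound $\abs{P\cap L_r}\le 6M^2 n^{\al}$.

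\textbf{Degenerate case.} If $tr=1$, then $t=r=1$ and $P\cap L_r$ is just $A\cap P$ with $P$ an interval of $n$ consecutive integers. Taking any $a\in A\cap P$, we get $P\subseteq[a-n,a+n]$. Ahlfors--David regularity at radius $n$ then gives a bound of the form $Mn^{\al}$ up to a factor $2^{\al}$. If $n>N$ we use $\abs{A}\le MN^{\al}$ instead.

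\textbf{Main case $tr\ge2$.}
\begin{enumerate}
\item Write the elements of $P\cap L_r$ as $\ell_j=p+jt$. Keep those $j$ of a fixed parity, choosing the parity class that retains at least half of the elements.
\item For each retained $\ell$, pick a point $a_\ell\in A\cap[\ell r,(\ell+1)r)$. Two retained indices differ by at least $2t$, so the chosen points are separated by more than $(2t-1)r\ge tr$.
\item The intervals $B(a_\ell,tr/2)$ are therefore (essentially) pairwise disjoint. By the lower regularity bound, each contains at least $M^{-1}(tr/2)^{\al}\ge (tr)^{\al}/(2M)$ points of $A$; here $tr/2\ge1$ is what we need.
\item All these intervals lie within $B(a_0,(n+1)tr)$ for any one chosen point $a_0$. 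The upper regularity bound then gives at most $M(2ntr)^{\al}\le 2M(ntr)^{\al}$ points of $A$ in their union. If the radius exceeds $N$, we use $\abs{A}\le MN^{\al}$ instead, exactly as in \thref{good}.
\item Dividing, the number of retained indices is at most $4M^2n^{\al}$. Doubling to account for the discarded parity class gives $\abs{P\cap L_r}\lesssim M^2n^{\al}$.
\end{enumerate}

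The main obstacle is the bookkeeping needed to land on the stated constant $6$ rather than something like $8$. The parity-halving step and the enclosing ball must be chosen economically. For example, one can use separation $(2t-1)r$ rather than just $tr$, and enclose all the small intervals in a ball of radius about $ntr$ centred at the middle chosen point rather than at an endpoint. One must also handle the half-open endpoints and the boundary constraints $1\le R\le N$ in the regularity hypothesis. None of this is conceptually difficult, but the constants must be tracked with care.
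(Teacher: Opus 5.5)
Your packing argument is sound, but as written it does not prove the statement. Steps 1--5 give only $\abs{P\cap L_r}\le 8M^2\abs{P}^{\al}$, hence only $\abs{P}\le(8M^2/\de)^{1/(1-\al)}$, and the constant $6$ is left as unfinished bookkeeping. The remedies you suggest would not supply it. For $t=1$ the separation $(2t-1)r$ is no better than $tr$. Centring at a ``middle'' chosen point need not shrink the enclosing radius, since the chosen points may all lie near the two ends of $P$.

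The factor is lost purely in rounding the $\al$-powers. Keep $(tr/2)^{\al}=2^{-\al}(tr)^{\al}$. For a fixed chosen point $a_0$ and every $\ell\in P\cap L_r$ one has $\abs{a_\ell-a_0}<((n-1)t+1)r\le ntr$, so all the balls $B(a_\ell,tr/2)$ lie in $B(a_0,(n+\tfrac12)tr)$. This bounds the number of retained indices by $M^2(2n+1)^{\al}\le 3^{\al}M^2n^{\al}$, hence $\abs{P\cap L_r}\le 2\cdot 3^{\al}M^2n^{\al}\le 6M^2n^{\al}$.

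You should also dispose of the case $tr>N$ explicitly. The lower regularity bound at radius $tr/2$ requires $tr/2\le N$. When $tr>N$, every occupied index lies in $[0,N/r]$, an interval of length less than $t$, so $P$ meets $L_r$ at most once.

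With this repair your route is valid, and it differs from the paper's. The paper does no parity halving. It uses the full-radius intervals $[a_\ell-tr,a_\ell+tr]$ for every $\ell\in P\cap L_r$ and observes that each point of $\R$ lies in at most three of them, because the corresponding indices lie in a window of length $2t+1$ and in a progression of step $t$. Bounded overlap costs a factor $3$, and the enclosing interval, of length less than $(\abs{P}+2)tr$, costs $2M$, giving $6M^2$ directly. Since the radius there is $tr\ge1$, no separate $tr=1$ case is needed.

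Your disjoint packing trades the overlap factor $3$ for a parity factor $2$ and a radius factor $2^{\al}$, at the cost of the special case $tr=1$. The two arguments are equally elementary. The exact constant is immaterial downstream, where it is absorbed into $K_\be$.
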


\begin{proof}
If $\abs{P \cap L_r} \leq 2$, the claim is immediate, so assume that $\abs{P \cap L_r} \geq 3$, so in particular $\abs{P} \geq 3$. Let $t>0$ be the common difference of $P$. If $tr>N$, then $P$ contains at most one index from $L_r$, since all occupied indices lie in an interval of length at most $N/r$. We may therefore suppose that $tr \leq N$.
\smallbreak
For each $\ell \in P \cap L_r$, choose
\[a_{\ell} \in A \cap [\ell r, \bra{\ell + 1}r),\]
and let
\[B_{\ell} = [a_{\ell}-tr, a_{\ell}+tr].\]
By Ahlfors--David regularity,
\[\abs{A \cap B_{\ell}} \geq \frac{\bra{tr}^{\al}}{M}.\]

If $x \in B_{\ell}$, then $a_{\ell} \in [x-tr,x+tr]$, so $\ell$ lies in an interval of length $2t+1$. The indices $\ell$ belong to an arithmetic progression of common difference $t$, so there are at most three possibilities, hence every point of $\R$ belongs to at most three intervals $B_{\ell}$.
\smallbreak
Let $J$ be the smallest interval containing all of the $B_{\ell}$. The largest and smallest indices in $P \cap L_r$ differ by at most $\bra{\abs{P}-1}t$, and therefore
\[\text{length}\bra{J} < \bra{\abs{P}+2}tr \leq 2 \abs{P}tr.\]
If $2\abs{P}tr \leq N$, choose a point $a$ of $A \cap J$ and consider the interval $\sqbra{a-2\abs{P}tr,a+2\abs{P}tr}$, which contains $A \cap J$. By Ahlfors--David regularity,
\[\abs{\sqbra{a-2\abs{P}tr,a+2\abs{P}tr} \cap A} \leq M \bra{2\abs{P}tr}^{\al},\]
and hence
\[\abs{A \cap J} \leq M \bra{2\abs{P}tr}^{\al} \leq 2M \bra{\abs{P}tr}^{\al}.\]
If $2 \abs{P}tr > N$, then $\abs{A} \leq MN^{\al}$, and we have the same bound.
\smallbreak
Summing the lower bounds for the $B_{\ell}$ and using the fact that each point of $\R$ is only contained in at most three intervals $B_{\ell}$, we have
\[\abs{P \cap L_r}\frac{\bra{tr}^{\al}}{M} \leq \sum_{\ell \in P\cap L_r}\abs{A \cap B_{\ell}} \leq 3 \abs{A \cap J} \leq 6M \bra{\abs{P}tr}^{\al},\]
and hence
\[\abs{P \cap L_r} \leq 6M^2 \abs{P}^{\al}.\]

The final part follows by combining this inequality with $\abs{P \cap L_r} \geq \de \abs{P}$ and rearranging.
    
\end{proof}

\section{Local entropy growth}
In this section, we prove the local entropy-growth estimate used in the main argument. The key point is that small entropic Ruzsa distance, combined with the inverse theorem of Green, Manners and Tao and Freiman's theorem, forces a positive proportion of one of the local distributions to lie in a generalised arithmetic progression, which is incompatible with the sparsity on arithmetic progressions we proved in \thref{AP}.
\subsection{Setup}
We will use the following special case of \cite[Proposition 1.2]{green2025sumsets}, which gives a quantitative correspondence between entropic Ruzsa distance and doubling. In fact, it is enough for our purposes that the bounds in \thref{GMT} depend only on $\be$, and their precise quantitative form is not needed. A sufficient qualitative statement follows from earlier work of Tao \cite[Proposition 5.2]{tao2010sumset}.

\begin{prop}[Green-Manners-Tao]\thlabel{GMT}
If $X$ and $Y$ are finitely supported integer-valued random variables such that $d[X,Y] \leq \beta$, then there is a non-empty finite set $S \sub \Z$ such that
\[\abs{S-S} \leq 4 e^{12 \beta}\abs{S},\]
\[d[U_S,Y] \leq 6\beta + \log 2,\]
where $U_S$ is the discrete uniform distribution on $S$.
\end{prop}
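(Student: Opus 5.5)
The plan is to treat \thref{GMT} as a direct specialisation of \cite[Proposition 1.2]{green2025sumsets} rather than to reprove the entropic inverse theorem. No new argument is intended. The work is to match hypotheses, transfer from a general abelian group to $\Z$, and convert the structural conclusion into the two displayed inequalities.

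First, I will recall the general statement. For $G$-valued random variables $X,Y$ on an abelian group $G$ with $d[X,Y]\le\beta$, it produces a finite non-empty $S\sub G$ satisfying two things. One is a doubling bound of the shape $\abs{S+S}\le K(\beta)\abs S$ or $\abs{S-S}\le K(\beta)\abs S$, with $K(\beta)$ exponential in $\beta$. The other is a bound on the entropic distance from $U_S$ to one of $X,Y$ that is linear in $\beta$. I will take $G=\Z$. Since $X$ and $Y$ are finitely supported, everything lives in $\Z$, and $S\sub\Z$ is finite and non-empty.

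Second, I will put the conclusion into the stated form.

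\begin{itemize}
\item If the general result controls $\abs{S+S}$ rather than $\abs{S-S}$, I will use the Ruzsa triangle inequality $\abs{S}\abs{S-S}\le\abs{S+S}^2$. This gives $\abs{S-S}\le K^2\abs S$. That still has the form $4e^{12\beta}$ provided the original constant was of the form $2e^{6\beta}$. If the constants do not match exactly, I will record whatever exponential-in-$\beta$ bound results. The paper notes that only the dependence on $\beta$ matters downstream.
\item If the distance bound is stated to $X$ rather than $Y$, I will use the symmetry $d[X,Y]=d[Y,X]$ and apply the result to the pair $(Y,X)$.
\item If the bound is stated as $d[U_S,X]+d[U_S,Y]$, I will simply drop the nonnegative term $d[U_S,X]\ge0$.
\item The additive $\log 2$ either appears verbatim or can be absorbed by the same weakening.
\end{itemize}

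As a fallback, I would use the qualitative version from \cite[Proposition 5.2]{tao2010sumset}. There, $d[X,Y]\le\beta$ yields a set $S$ with $\abs{S-S}\le C(\beta)\abs S$ and $d[U_S,Y]\le C(\beta)$. This suffices for the later application in place of the explicit constants. Its proof goes through the entropic Balog--Szemer\'edi--Gowers lemma, which passes from small $d[X,Y]$ to a large-probability set with small doubling.

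The main obstacle is purely bookkeeping. I have to check that the precise formulation in \cite{green2025sumsets}, including its normalisation of $d$ and its choice of sum versus difference set, yields exactly $4e^{12\beta}$ and $6\beta+\log2$, rather than merely comparable quantities.
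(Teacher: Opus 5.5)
Your proposal matches the paper's treatment: the paper gives no independent proof and simply invokes \cite[Proposition 1.2]{green2025sumsets} as a special case. It notes, as you do, that only the $\beta$-dependence of the constants matters downstream, and that \cite[Proposition 5.2]{tao2010sumset} would suffice qualitatively. Your adaptation steps are all valid in case the cited formulation differs slightly: Ruzsa's triangle inequality $\abs{S}\abs{S-S}\leq\abs{S+S}^2$, the symmetry of $d$, and dropping a nonnegative distance term.
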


Before stating Freiman's theorem, we need the following definition.
\begin{defn}
A \emph{generalised arithmetic progression} of rank $d$ is a set of the form
\[\mathcal{P}=\{x_0+n_1v_1+\cdots+n_dv_d:0\leq n_i<N_i,\ n_i \in \Z\},\]
where $x_0,v_1,\ldots,v_d\in\Z$ and $N_1,\ldots,N_d$ are positive integers. We say that $\mathcal{P}$ is \emph{proper} if every choice of
\[(n_1,\ldots,n_d)\in [0,N_1)\times\cdots\times[0,N_d)\]
gives a distinct element of $\mathcal{P}$. In this case,
\[|\mathcal{P}|=N_1\cdots N_d.\] 
\end{defn}

We will use the following form of Freiman's theorem. For more background and a proof of Freiman's theorem see, for example, Chapter 5 of \cite{tao2006additive}.
\begin{prop}\thlabel{Freiman}
    For every $K \geq 1$, there are $d(K) \in \N$ and $F(K) > 0$ such that every non-empty finite set $S \sub \Z$ which satisfies
    \[\abs{S-S} \leq K \abs{S}\]
    is contained in a proper generalised arithmetic progression $\mathcal{P}$ with
    \[\text{rank}\ \mathcal{P} \leq d(K), \qquad \abs{\mathcal{P}} \leq F(K) \abs{S}.\]
\end{prop}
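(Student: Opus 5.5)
This is the classical Freiman theorem in $\Z$. I would follow Ruzsa's proof, with the Bogolyubov--Chang step as organised in Chapter 5 of \cite{tao2006additive}, and track nothing beyond the fact that $d$ and $F$ depend only on $K$. The plan has four steps: Plünnecke--Ruzsa bounds, Freiman modelling in a cyclic group, a Bogolyubov argument producing a Bohr set and then a proper progression, and a Ruzsa covering step that returns to $S$. The first step is routine. From $\abs{S-S}\le K\abs{S}$, the Plünnecke--Ruzsa inequalities give $\abs{mS-nS}\le K^{m+n}\abs{S}$ for all $m,n\ge 0$. In particular $\abs{8S-8S}\le K^{16}\abs{S}$.

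\emph{Modelling.} By Ruzsa's modelling lemma, since $\abs{8S-8S}$ is controlled, there is a subset $S'\subseteq S$ with $\abs{S'}\ge \abs{S}/8$. This $S'$ is Freiman $8$-isomorphic to a subset $T$ of $\Z/p\Z$ for a prime $p$ with $\abs{S}\ll p\le C(K)\abs{S}$. To construct it, map $S$ into $\Z/q\Z$ for a large prime $q$, multiply by a generic dilate $\lambda$, and reduce into an interval of length $q/8$ by pigeonhole. Hence $T$ has density at least $\sigma=\sigma(K)>0$ in $\Z/p\Z$.

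\emph{Bogolyubov and geometry of numbers.} Fourier analysis shows that $2T-2T$ contains the Bohr set $B(\Gamma,1/4)$, where $\Gamma$ is the set of frequencies $\gamma$ with $\abs{\widehat{1_T}(\gamma)}\ge \sigma^{3/2}\,p$. Parseval gives $\abs{\Gamma}\le \sigma^{-2}$. Minkowski's second theorem applied to the lattice generated by $\Z^{\Gamma}$ and $(\gamma/p)_{\gamma\in\Gamma}$ then shows that this Bohr set contains a proper progression $Q$ in $\Z/p\Z$ with the following properties:
\begin{itemize}
\item its rank is $d\le\abs{\Gamma}$;
\item its size satisfies $\abs{Q}\ge c(K)\,p$;
\item it is small enough that $Q$, and indeed a bounded dilate of it, lies in the region where the Freiman isomorphism is valid.
\end{itemize}
I expect this to be the main obstacle: getting properness together with size proportional to $p$, and making sure $Q$ sits inside the image of $2S'-2S'$ so that it transfers back. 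Because $S'$ and $T$ are $8$-isomorphic, the progression $Q\subseteq 2T-2T$ pulls back to a proper progression $P_0\subseteq 2S-2S\subseteq\Z$ of the same rank with $\abs{P_0}\ge c'(K)\abs{S}$. Properness is preserved because the isomorphism has order $8$, which covers the sums needed to compare elements of $Q$.

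\emph{Covering.} Take a maximal set $X\subseteq S$ such that the translates $x+P_0$, $x\in X$, are pairwise disjoint. Each such translate lies in $3S-2S$, which has size at most $K^5\abs{S}$, so $\abs{X}\le K^5\abs{S}/\abs{P_0}\le C''(K)$. By maximality, $S\subseteq X+P_0-P_0$. The set $P_0-P_0$ is a progression of the same rank with side lengths doubled, and $X$ is contained in a progression of rank $\abs{X}$ with all sides of length $2$. So $S$ lies in a progression of rank at most $d+C''(K)$ and size at most $2^{d+\abs{X}}\abs{P_0}\le F(K)\abs{S}$.

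This progression need not be proper. To finish, I would invoke the standard fact that any progression of rank $r$ lies in a proper progression of rank at most $r$ and size at most $C(r)$ times its own size. This is a second geometry-of-numbers argument, which can also be quoted from \cite[Chapter 5]{tao2006additive}. It yields the required $d(K)$ and $F(K)$.
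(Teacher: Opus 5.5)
Your outline is correct. It is the standard Ruzsa argument (Plünnecke, modelling in $\Z/p\Z$, Bogolyubov with geometry of numbers, Ruzsa covering, then passing to a proper progression) as presented in Chapter 5 of \cite{tao2006additive}, which the paper cites in place of a proof. The step you flag as the main obstacle is automatic: the Freiman $8$-isomorphism $S'\to T$ induces a Freiman $2$-isomorphism $2S'-2S'\to 2T-2T$, and this map pulls the proper progression $Q\subseteq 2T-2T$ back to a proper progression in $\Z$ with the same dimensions, so no dilate condition is needed.
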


\subsection{Two key lemmas}

The first lemma says that, if $X$ and $Y$ are close in entropic Ruzsa distance, then a positive proportion of $Y$ lies inside a generalised arithmetic progression. We use this to prove a local entropy growth lemma.
\begin{lemma}\thlabel{GAP}
For any $\beta > 0$, there are constants $c_{\beta},C_{\beta} > 0$ and an integer $d_{\be} > 0$ such that, if $X$ and $Y$ are finitely supported integer-valued random variables with
\[d[X,Y] \leq \beta,\]
then there is a proper generalised arithmetic progression $\PP \sub \Z$ such that
\[\text{rank} \ \PP \leq d_{\beta}, \qquad \abs{\PP} \leq C_{\beta} e^{H(Y)}, \quad \text{and} \quad\P \bra{Y \in \PP} \geq c_{\beta}.\]
    
\end{lemma}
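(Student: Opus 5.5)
We apply \thref{GMT} to $X$ and $Y$. This gives a non-empty finite set $S\sub\Z$ with $\abs{S-S}\leq 4e^{12\be}\abs{S}$ and $d[U_S,Y]\leq 6\be+\log 2=:\be'$. Set $K=4e^{12\be}$ and apply \thref{Freiman} to $S$. This gives a proper generalised arithmetic progression $\PP_0\supseteq S$ with rank at most $d(K)$ and $\abs{\PP_0}\leq F(K)\abs{S}$.

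First we compare $\abs{S}$ with $e^{H(Y)}$. Since $d[U_S,Y]\geq\frac12\abs{H(U_S)-H(Y)}$, we get $\abs{\log\abs S-H(Y)}\leq 2\be'$. Hence $\abs{\PP_0}\leq F(K)e^{2\be'}e^{H(Y)}$.

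Next we show that $Y$ puts positive mass near $\PP_0$. Let $Y'$ be independent of $U_S$. Then $H(U_S-Y')\leq \frac12\log\abs S+\frac12H(Y)+\be'$, and by the comparison above this is at most $\log\abs S+2\be'$. Since $\max_z\P(U_S-Y'=z)\geq e^{-H(U_S-Y')}\geq e^{-2\be'}/\abs S$, there is some $z$ with
\[\P(Y'\in S-z)=\abs S\,\P(U_S-Y'=z)\geq e^{-2\be'}.\]
So $\P(Y\in S-z)\geq e^{-2\be'}$. Now take $\PP=\PP_0-z$. This is again a proper generalised arithmetic progression of the same rank and size, and it contains $S-z$.

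The result follows with $d_\be=d(K)$, $C_\be=F(K)e^{2\be'}$ and $c_\be=e^{-2\be'}$, all of which depend only on $\be$. None of the steps is a real obstacle. The only point needing care is the translate: the lower bound on $\P(Y\in\cdot)$ holds for a translate of $S$ and not for $S$ itself. Translation invariance of properness and of size handles this.
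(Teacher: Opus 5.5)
Your proposal is correct and follows the paper's proof essentially step for step: Green--Manners--Tao, then the comparison $\abs{\log\abs{S}-H(Y)}\leq 12\beta+2\log 2$, then pigeonholing a popular value of $U_S'-Y'$ to find a translate $S-n$ with $\P(Y\in S-n)\geq e^{-12\beta-2\log 2}$, then Freiman, all with identical constants. The only cosmetic difference is that you apply Freiman to $S$ and translate the resulting progression afterwards, whereas the paper translates first and applies Freiman to $T=S-n$, using $T-T=S-S$.
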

\begin{proof}
    By \thref{GMT}, there is a finite set $S \sub \Z$ such that
    \[d[U_S,Y] \leq 6 \be + \log 2,\]
    where $U_S$ is the uniform distribution on $S$. Thus,
    \[\abs{\log \abs{S}-H(Y)} \leq 12 \be + 2 \log 2.\]
    Let $U_S'$ and $Y'$ be independent copies of $U_S$ and $Y$ respectively. Then
    \[H\bra{U_S'-Y'}\leq 6 \be + \log 2 + \frac{1}{2}\log\abs{S}+\frac{1}{2}H(Y') \leq \log \abs{S} + 12 \be + 2 \log 2.\]
There must be some $n \in \Z$ such that
\[\P\bra{U_S'-Y'=n} \geq e^{-12\be - 2\log 2}\abs{S}^{-1}.\]
Since $U_S'$ is uniform on $S$,
\[\P\bra{U_S'-Y'=n}=\abs{S}^{-1}\P\bra{Y' \in S-n},\]
and hence
\[\P\bra{Y \in S-n}=\P\bra{Y' \in S-n} \geq e^{-12 \be - 2\log 2}.\]
Let $T=S-n$. Then $T-T=S-S$, and hence by \thref{GMT},
\[\abs{T-T} \leq 4 e^{12\be}\abs{T}.\]
Thus by \thref{Freiman} with $K=4 e^{12\be}$, $T$ is contained in a proper generalised arithmetic progression $\PP$ with
\[\text{rank}\ \PP \leq d_{\be},\qquad \abs{\PP} \leq F_{\be}\abs{T},\]
where
\[F_{\be}=F\bra{4e^{12 \be}},\qquad d_{\be}=d\bra{4e^{12 \be}}.\]
Since
\[\abs{\log \abs{S}-H(Y)} \leq 12 \be + 2 \log 2,\]
it follows that
\[\abs{T}=\abs{S} \leq e^{12 \be + 2 \log 2}e^{H(Y)}.\]
The lemma follows after taking
\[C_{\be} = F_{\be}e^{12 \be + 2 \log 2}, \qquad c_{\be}=e^{-12 \be - 2 \log 2}.\]
    
\end{proof}

The second lemma shows we have local entropy growth on intervals where $\abs{A \cap I} \geq \frac{R^{\al}}{40M}$.
\begin{lemma}\thlabel{local}
For every $\be>0$, there is a constant $K_{\be}\geq1$, depending only on $\be$, such that the following holds for every $0<\al<1$. Let
    \[C = \ceil{\frac{K_\be}{\al \bra{1-\al}}}, \qquad q \geq M^C, \qquad R = qr \leq N,\]
    and let $I,J \in \mathcal{I}_R$ satisfy
    \[\min\{\abs{A \cap I},\abs{A \cap J}\} \geq \frac{R^{\al}}{40M}.\]
    Then
    \[d\sqbra{D_I,-D_J} > \be.\]
\end{lemma}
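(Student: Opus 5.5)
We argue by contradiction. Suppose $d[D_I,-D_J]\le\be$. Apply \thref{GAP} with $X=D_I$ and $Y=-D_J$. This gives a proper generalised arithmetic progression $\PP$ with
\[\text{rank}\,\PP\le d_\be,\qquad \abs{\PP}\le C_\be e^{H(D_J)},\qquad \P\bra{-D_J\in\PP}\ge c_\be.\]
Replacing $\PP$ by $-\PP$ gives the same conclusions for $D_J$ itself. We will show that these conditions are incompatible with \thref{AP} once $q\ge M^C$ and $K_\be$ is large.

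\emph{Step 1: cover the mass of $D_J$ inside $\PP$ by fibres.} Write $\PP=\{x_0+n_1v_1+\cdots+n_dv_d\}$, and order the side lengths so that $N_1\ge N_j$ for all $j$. The one-dimensional fibres in the $v_1$ direction are arithmetic progressions $P$ of length $N_1$. There are $\abs{\PP}/N_1$ of them, and they partition $\PP$. Each fibre contains at most $\abs{P\cap \supp D_J}$ occupied indices, and each occupied index carries probability at most $40M^2q^{-\al}$ by \thref{good}. Hence
\[c_\be\le \sum_{P}\P\bra{D_J\in P}\le \frac{\abs{\PP}}{N_1}\cdot 40M^2q^{-\al}\cdot\max_P\abs{P\cap\supp D_J}.\]

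\emph{Step 2: apply \thref{AP} to each fibre.} The occupied indices $k$ of $J$ correspond to indices $\ell=nq+k$ in $L_r$, and translating $P$ by $nq$ preserves its length. So \thref{AP} gives $\abs{P\cap\supp D_J}\le 6M^2N_1^{\al}$. Together with $\abs{\PP}\le N_1^{d}$ and $\abs{\PP}\le C_\be e^{H(D_J)}\le C_\be\cdot 8M^2q^{\al}$, from \thref{good} and $H\le\log\abs{\supp}$, this yields
\[c_\be\le 240M^4\,\frac{\abs{\PP}}{q^{\al}}\,N_1^{\al-1}\le 240\cdot 8\, C_\be M^6\, N_1^{-(1-\al)}.\]
It therefore suffices to show that $N_1$ is large, namely $N_1\ge M^{C/(2d_\be)}$ or so. Then $N_1^{1-\al}\ge M^{K_\be/(2d_\be\al)}$, which beats $M^6C_\be/c_\be$ for $K_\be$ large, using $M\ge2$.

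\emph{Step 3: lower bound for $N_1$ (the main obstacle).} Because $\PP$ must have large size, its longest side is long. Precisely, $\P(D_J\in\PP)\ge c_\be$ and $\max_k\P(D_J=k)\le 40M^2q^{-\al}$ force
\[\abs{\PP}\ge \frac{c_\be q^{\al}}{40M^2}.\]
Since $\abs{\PP}\le N_1^{d_\be}$, we get
\[N_1\ge \bra{\frac{c_\be q^{\al}}{40M^2}}^{1/d_\be}\ge \bra{\frac{c_\be}{40}}^{1/d_\be}M^{(\al C-2)/d_\be}.\]
With $\al C\ge K_\be/(1-\al)$, we have
\[N_1^{1-\al}\gtrsim_\be M^{(K_\be-2(1-\al))/d_\be}.\]
This is where the factor $\al(1-\al)$ in $C$ is used. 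The $\al$ makes $q^\al$ a large power of $M$, and the $1-\al$ survives the exponent from \thref{AP}. Choosing $K_\be\ge d_\be(6+\log_2(10^4C_\be/c_\be))+2$ then gives the contradiction $c_\be<c_\be$. The delicate bookkeeping is keeping every $\be$-dependent constant independent of $\al$. In particular, the $\al$-dependence of the $(c_\be/40)^{1/d_\be}$ factor must be absorbed into $K_\be$ via $M\ge2$, and this should be checked carefully.
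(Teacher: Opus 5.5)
Your proposal is correct and follows the paper's proof. It uses the same progression from \thref{GAP}, the same bounds $c_\be q^{\al}/(40M^2)\le\abs{\PP}\le 8C_\be M^2q^{\al}$, the longest side $N_1\ge\abs{\PP}^{1/d_\be}$, and \thref{AP} on its fibres; the only difference is that you average $\abs{P\cap L_r}\le 6M^2\abs{P}^{\al}$ over all fibres instead of pigeonholing one dense fibre and using the second form of \thref{AP}, which is equivalent.

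One small correction: your explicit $K_\be$ does not in general absorb the factor $(c_\be/40)^{(1-\al)/d_\be}$. That factor is bounded below by $(c_\be/40)^{1/d_\be}$ uniformly in $\al$, since $c_\be<40$, but it contributes an extra $\log_2(40/c_\be)$ that your slack $d_\be\log_2(10^4/1920)$ need not cover. So take $K_\be>2+6d_\be+\log_2\bigl((40/c_\be)(1920C_\be/c_\be)^{d_\be}\bigr)$, which is exactly the paper's condition.
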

\begin{proof}
    Let $I=[nR,\bra{n+1}R)$ and $J = [mR,\bra{m+1}R)$. Define the translated variables
    \[\h{D_I}=D_I + nq, \qquad \h{D_J} = D_J + mq.\]
    Then
    \[\supp{\h{D_I}}, \supp{\h{D_J}} \sub L_r,\]
    and since entropic Ruzsa distance is invariant under translating either variable and under simultaneously negating both variables,
    \[d[D_I,-D_J]=d[-D_I,D_J]=d[-\h{D_I},\h{D_J}].\]
    Suppose for a contradiction that
    \[d\sqbra{D_I,-D_J} \leq \be.\]
    By \thref{GAP}, applied to $-\h{D_I}$ and $\h{D_J}$, there is a generalised arithmetic progression $\PP$ such that
    \[\text{rank}\ \PP \leq d_{\be}, \qquad \P\bra{\h{D_J} \in \PP} \geq c_{\be}\]
    and
    \[\abs{\PP} \leq C_{\be}e^{H(\h{D_J})} \leq C_{\be}\abs{\supp \h{D_J}} \leq 8C_{\be}M^2 q^{\al},\]
    where the last inequality follows from \thref{good}.
    \smallbreak
    Let
    \[B = \PP \cap \supp \h{D_J}.\]
    By \thref{good}, and the fact that $\P\bra{\h{D_J} \in \PP} \geq c_{\be}$, we have
    \[\abs{B} \geq \frac{c_{\be}q^{\al}}{40M^2} .\]
    Combining $\abs{\PP} \leq 8C_{\be}M^2 q^{\al}$ with the previous inequality, we obtain
    \[\frac{\abs{B}}{\abs{\PP}} \geq \frac{c_{\be}}{320C_{\be}M^4}.\]
    Choosing $K_{\be}$ large enough ensures that $\abs{B} \geq 2$ and thus $\PP$ has positive rank, write
    \[\PP = \{x_0 + n_1v_1 + \cdots + n_dv_d : 0 \leq n_i < N_i\},\]
    and choose $i$ for which $N_i$ is largest. Since $\PP$ is proper, we have
    \[N_i \geq \abs{\PP}^{1/d} \geq \abs{B}^{1/d_{\be}}.\]
    Fixing all $n_j$ with $j \neq i$, and allowing $n_i$ to vary, gives $\frac{\abs{\PP}}{N_i}$ fibres which partition $\PP$, where each is an arithmetic progression of length $N_i$. There must be one fibre $Q$ which satisfies
    \[\abs{Q} \geq \abs{B}^{1/d_{\be}}, \qquad \frac{\abs{Q \cap B}}{\abs{Q}} \geq \frac{\abs{B}}{\abs{\PP}} \geq \frac{c_{\be}}{320C_{\be}M^4}.\]

    Since $B \sub \supp \h{D_{J}} \sub L_r$, \thref{AP} with $\de = \frac{c_{\be}}{320C_{\be}M^4}$ gives
    \[\abs{Q} \leq \bra{\frac{1920C_{\be}}{c_{\be}}}^{\frac{1}{1-\al}}M^{\frac{6}{1-\al}}.\]
    On the other hand, $q \geq M^C$ gives
    \[\abs{Q} \geq \abs{B}^{1/d_{\be}} \geq \bra{\frac{c_{\be}}{40}}^{1/d_{\be}}M^{\frac{\al C - 2}{d_{\be}}}.\]
    Thus combining these, we have
    \[\bra{\frac{c_{\be}}{40}}^{1/d_{\be}}M^{\frac{\al C - 2}{d_{\be}}} \leq \bra{\frac{1920C_{\be}}{c_{\be}}}^{\frac{1}{1-\al}}M^{\frac{6}{1-\al}}.\]
Raising this inequality to the power of $d_{\be}\bra{1-\al}$, and choosing $c_{\be} \leq 1, C_{\be} \geq 1$ we have
\[M^{\bra{\al C - 2}\bra{1-\al}-6d_\be} \leq \bra{\frac{40}{c_{\be}}}^{1-\al} \bra{\frac{1920C_{\be}}{c_{\be}}}^{d_{\be}} \leq {\frac{40}{c_{\be}}} \bra{\frac{1920C_{\be}}{c_{\be}}}^{d_{\be}}.\]
On the other hand, using the fact that
\[C = \ceil{\frac{K_{\be}}{\al \bra{1-\al}}},\]
we have
\[\bra{\al C - 2}\bra{1-\al}-6d_{\be} \geq K_{\be} - 2 - 6d_{\be}.\]
Since $M \geq 2$, we can thus obtain a contradiction by choosing $K_{\be}$ suitably large so that
\[K_{\be} > 2 + 6 d_{\be} + \frac{\log \bra{{\frac{40}{c_{\be}}} \bra{\frac{1920C_{\be}}{c_{\be}}}^{d_{\be}}}}{\log 2}.\]
\end{proof}

\section{Accumulating entropy growth across scales}
In this section, we prove \thref{main} by accumulating the local entropy gains from the previous section across successive scales.

\subsection{Entropy across a block boundary}
The following lemma gives the required entropy decomposition across a block boundary, with a loss of at most $\log 2$ arising from the possible carry.
\begin{lemma}\thlabel{block}
    Let $B \geq 2$ be an integer, and let $X,Y$ be finitely supported independent integer-valued random variables. Write
    \[X = BU + \widetilde{X}, \qquad Y = BV + \widetilde{Y}, \qquad 0 \leq \widetilde{X},\widetilde{Y} < B.\]
    Then
    \[d\sqbra{X,-Y} \geq d\sqbra{U,-V} + \E_{u,v} d\sqbra{\widetilde{X} | U=u, -\bra{\widetilde{Y}|V=v}}-\log 2,\]
    where the expectation is with respect to the joint laws of the conditioning variables.
\end{lemma}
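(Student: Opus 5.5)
The plan is to compare $H(X+Y)$ with the entropy of the pair consisting of the coarse sum $U+V$ and the fine data, and to use the chain rule $H(X)=H(U)+H(\widetilde X\mid U)$ (and likewise for $Y$) to split the normalising terms. Since $X$ and $Y$ are independent, we have $d[X,-Y]=H(X+Y)-\frac12H(X)-\frac12H(Y)$. The same holds for $U,V$, which are independent as functions of $X$ and $Y$ respectively. So it suffices to prove
\[H(X+Y)\geq H(U+V)+\E_{u,v}H\bra{\widetilde X+\widetilde Y\mid U=u,V=v}-\log 2. \tag{$*$}\]

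\emph{Reducing to $(*)$.} Suppose $(*)$ holds. Because $(U,\widetilde X)$ is a function of $X$ and $(V,\widetilde Y)$ is a function of $Y$, independence of $X,Y$ shows that, conditionally on $U=u,V=v$, the variables $\widetilde X$ and $\widetilde Y$ are independent with laws $(\widetilde X\mid U=u)$ and $(\widetilde Y\mid V=v)$. Hence
\[H\bra{\widetilde X+\widetilde Y\mid U=u,V=v}=d\sqbra{\widetilde X\mid U=u,-(\widetilde Y\mid V=v)}+\tfrac12H(\widetilde X\mid U=u)+\tfrac12H(\widetilde Y\mid V=v).\]
Averaging over the joint law of $(U,V)$, which is a product law, gives $\frac12H(\widetilde X\mid U)+\frac12H(\widetilde Y\mid V)$ for the last two terms. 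Subtracting $\frac12H(X)+\frac12H(Y)=\frac12\bra{H(U)+H(\widetilde X\mid U)}+\frac12\bra{H(V)+H(\widetilde Y\mid V)}$ from both sides of $(*)$ then yields exactly the claimed inequality.

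\emph{Proving $(*)$ via the carry.} Write $\widetilde X+\widetilde Y=BE+W$ with $E\in\{0,1\}$ and $0\le W<B$. This is possible since $0\le\widetilde X+\widetilde Y\le 2B-2$. Then $X+Y=B(U+V+E)+W$, so $\floor{(X+Y)/B}=U+V+E$. Consequently, given $X+Y$, the value $U+V$ is determined up to the two possibilities for $E$, and so $H(U+V\mid X+Y)\le\log 2$. Therefore
\[H(X+Y)\ge H(X+Y,U+V)-\log 2=H(U+V)+H(X+Y\mid U+V)-\log 2.\]
Since conditioning does not increase entropy, $H(X+Y\mid U+V)\ge H(X+Y\mid U,V)$. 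Given $U=u,V=v$, the map $\widetilde X+\widetilde Y\mapsto X+Y=B(u+v)+\widetilde X+\widetilde Y$ is a translation, so $H(X+Y\mid U,V)=H(\widetilde X+\widetilde Y\mid U,V)$. Combining these steps gives $(*)$.

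The only step with real content is the carry bound $H(U+V\mid X+Y)\le\log2$. It is where the loss of $\log 2$ enters, and it requires the floor identity above together with the range $0\le\widetilde X+\widetilde Y<2B$. The remaining care is bookkeeping: checking that the conditional independence of $\widetilde X,\widetilde Y$ given $(U,V)$ really follows from the independence of $X$ and $Y$, so that the conditional entropies are genuinely entropic Ruzsa distances between the stated conditional laws.
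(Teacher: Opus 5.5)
Your proposal is correct and follows essentially the same route as the paper: the carry bound $H(U+V\mid X+Y)\le\log 2$, the chain rule on $H(X+Y,U+V)$, the inequality $H(X+Y\mid U+V)\ge H(X+Y\mid U,V)=H(\widetilde X+\widetilde Y\mid U,V)$, and the splitting $H(X)=H(U)+H(\widetilde X\mid U)$. The only differences are that you state $(*)$ before proving it, and you spell out the conditional independence of $\widetilde X,\widetilde Y$ given $(U,V)$, which the paper leaves implicit.
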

\begin{proof}
Let
\[C = \floor{\frac{\widetilde{X}+\widetilde{Y}}{B}} \in \{0,1\}\]
record whether or not there is a carry, so that
\[\floor{\frac{X+Y}{B}} = U+V+C.\]

Thus $X+Y$ determines $U+V$ up to at most two possibilities, and hence
\[H\bra{U+V|X+Y} \leq \log 2.\]
Applying the chain rule for conditional entropy to the joint entropy $H(X+Y,U+V)$ in two different ways and equating the resulting expressions, we have
\[H(X+Y) = H(U+V) + H (X+Y|U+V) - H(U+V| X+Y).\]
We also have
\[H(X+Y|U+V) \geq H\bra{X+Y|U,V} = H\bra{\widetilde{X}+\widetilde{Y}|U,V}.\]
Combining the previous three displayed equations, we obtain the inequality
\[H\bra{X+Y} \geq H(U+V) + H\bra{\widetilde{X}+\widetilde{Y}|U,V} - \log 2.\]
Since the quotient and remainder determine the original integer uniquely,
\[H(X) = H(U) + H(\widetilde{X}|U), \qquad H(Y) = H(V) + H(\widetilde{Y}|V).\]

Since $X$ and $Y$ are independent,
\[d[X,-Y] = H(X+Y) - \frac{1}{2}H(X) - \frac{1}{2}H(Y)\]
\[\geq H(U+V) + H\bra{\widetilde{X}+\widetilde{Y}|U,V} - \log 2 - \frac{1}{2}\bra{H(U) + H(\widetilde{X}|U)} - \frac{1}{2}\bra{H(V) + H(\widetilde{Y}|V)}\]
\[=d[U,-V] + H\bra{\widetilde{X}+\widetilde{Y}|U,V}-\frac{1}{2}\bra{H(\widetilde{X}|U)+H(\widetilde{Y}|V)} - \log 2.\]
But expanding the conditional entropies as expectations over $U=u$ and $V=v$, we have
\[H\bra{\widetilde{X}+\widetilde{Y}|U,V}-\frac{1}{2}\bra{H(\widetilde{X}|U)+H(\widetilde{Y}|V)}=\E_{u,v} d\sqbra{\widetilde{X} | U=u, -\bra{\widetilde{Y}|V=v}},\]
proving the result.
\end{proof}

\subsection{Completing the proof}

We can now combine everything to prove the main result.
\begin{proof}[Proof of \thref{main}.]
    If $\al=0$, then $\abs{A}\leq M$. If $\abs{A}=1$ the result is trivial, while if $\abs{A}\geq2$ then
    \[\log\frac{\abs{A+A}}{\abs{A}}\geq\log\frac32\geq\frac{\log(3/2)}{\log M}\log\abs{A}.\]
    Thus the result holds with $c=\log(3/2)$. We assume from now on that $0 < \al < 1$. Once again, $\abs{A}=1$ is trivial, so we assume $\abs{A} \geq 2$.
    \smallbreak
    Let $X$ and $Y$ be independent uniform random variables on $A$. Fix
    \[\be = \frac{200 \log 2}{81},\]
    write $K = K_{\be}$ and let
    \[C = \ceil{\frac{K}{\al\bra{1-\al}}}, \qquad q= \ceil{M^C}, \qquad m = \floor{\log_q N}.\]
    First suppose that $m \geq 1$, and define, for each $0 \leq j \leq m$, the random variables
    \[U_j = \floor{\frac{X}{q^j}}, \qquad V_j = \floor{\frac{Y}{q^j}}.\]
    For $0 \leq j < m$, let
    \[X_j =U_j - qU_{j+1},\qquad Y_j = V_j - q V_{j+1},\]
    so that $X_j$ and $Y_j$ are the $j$-th digit of the base $q$ expansion of $X$ and $Y$ respectively.
    \smallbreak
    Since $X$ and $Y$ are independent, for every $j$ so are $U_j$ and $V_j$, as functions of $X$ and $Y$. We apply \thref{block} with $B=q$ to $U_j$ and $V_j$, and for convenience we write
    \[\mathcal{E}_j = \E_{s,t}d\sqbra{X_j|U_{j+1}=s, -\bra{Y_j|V_{j+1}=t}},\]
    where the expectation is over the joint law of $U_{j+1}$ and $V_{j+1}$. Then \thref{block} gives
    \[d\sqbra{U_j, - V_j} \geq d\sqbra{U_{j+1}, -V_{j+1}} + \EE_j - \log 2.\]
    \smallbreak
    Fix $0 \leq j < m$, and put
    \[r = q^j, \qquad R = qr.\]
    For some value $s$ of $U_{j+1}$, define the intervals
    \[I(s) = [sR,\bra{s+1}R), \qquad I(s)_k = [sR + kr, sR + (k+1)r).\]
    Then the event $U_{j+1}=s$ is exactly the event $X \in I(s)$, and the event $U_{j+1}=s, X_j = k$ is exactly the event $X \in I(s)_k$, for every $0 \leq k < q$.
    \smallbreak
    Since $X$ is uniform on $A$, it follows that whenever $\P\bra{U_{j+1}=s} > 0$,
    \[\P \bra{X_j=k|U_{j+1}=s} = \frac{\abs{A \cap I(s)_k}}{\abs{A \cap I(s)}}=\P \bra{D_{I(s)}=k}.\]
    The same holds for $V_{j+1}=t$, with $J(t), J(t)_k$ defined analogously, giving the conditional law $D_{J(t)}$ for $Y_j$. These conditional variables $D_{I(s)}$ and $D_{J(t)}$ are independent, since the conditioning events involve $X$ and $Y$ separately. Therefore, if $I=I_R(X)$ and $J=I_R(Y)$ then
    \[\EE_j = \E_{I,J} d[D_I,-D_J] = \sum_{I,J} \frac{\abs{A \cap I}\abs{A \cap J}}{\abs{A}^2}d[D_I,-D_J],\]
    where the sum is over the intervals of $\mathcal{I}_R$ which contain a point in $A$.
    \smallbreak
    For each $R$, let $\mathcal{G}_R$ denote the good intervals in $\mathcal{I}_R$, meaning $I \in \mathcal{I}_R$ with
    \[\abs{A \cap I} \geq \frac{R^{\al}}{40M}.\]
    By \thref{good},
    \[\sum_{I \in \mathcal{G}_R}\frac{\abs{A \cap I}}{\abs{A}} = \P\bra{I_R(X) \in \mathcal{G}_R} \geq \frac{9}{10}.\]
    By \thref{local}, for every $I,J \in \mathcal{G}_R$, we have
    \[d[D_I,-D_J] > \be.\]
    Thus, we have
    \[\sum_{I,J} \frac{\abs{A \cap I}\abs{A \cap J}}{\abs{A}^2}d[D_I,-D_J] \geq \be \sum_{I,J \in \mathcal{G}_R}\frac{\abs{A \cap I}\abs{A \cap J}}{\abs{A}^2}\]
    \[ \geq \be \bra{\sum_{I \in \mathcal{G}_R}\frac{\abs{A \cap I}}{\abs{A}}}^2\]
    \[\geq \be \bra{\frac{9}{10}}^2 = 2 \log 2,\]
    and hence
    \[\EE_j \geq 2 \log 2.\]
    Combining this with $d\sqbra{U_j, - V_j} \geq d\sqbra{U_{j+1}, -V_{j+1}} + \EE_j - \log 2$, we have
    \[d[U_j,-V_j] \geq d[U_{j+1},-V_{j+1}]+ \log 2\]
    for every $0 \leq j < m$. Summing these inequalities and telescoping, and using the fact that $X=U_0$, $Y=V_0$, we have
    \[d[X,-Y] \geq d[U_m,-V_m] + m \log 2 \geq m \log 2.\]
    But since $X,Y$ have the same distribution, $d[X,-X]=d[X,-Y] = H(X+Y)-H(X)$, and since $H(X) = \log \abs{A}$, $H(X+Y) \leq \log \abs{A+A}$,
    \[d[X,-X] \leq \log \frac{\abs{A+A}}{\abs{A}}.\]
    Combining this with the above, we have
    \[m \log 2 \leq \log \frac{\abs{A+A}}{\abs{A}}.\]
    Since $m = \floor{\log_q N} \geq 1$,
    \[m \geq \frac{1}{2}\frac{\log N}{\log q},\]
    and consequently, since $q = \ceil{M^C}$ and hence $\log q \leq \bra{C+1}\log M$,
    \[\log \frac{\abs{A+A}}{\abs{A}} \geq \frac{\log 2}{2 (C+1)}\frac{\log N}{\log M} .\]
Since $m \geq 1$, we have $N \geq q \geq M^C$. Using the fact that $\abs{A} \leq MN^{\al}$, we therefore have
\[\log \abs{A} \leq \log M + \al \log N \leq \bra{\al + \frac{1}{C}}\log N.\]
Substituting this into the previous inequality gives
\[\log \frac{\abs{A+A}}{\abs{A}} \geq \frac{\log 2}{2\bra{C+1}\bra{\al + \frac{1}{C}}}\frac{\log \abs{A}}{\log M}.\]
Since
\[C \leq \frac{K}{\al \bra{1-\al}} + 1,\]
\[\bra{C+1}\bra{\al + \frac{1}{C}} = \al \bra{C+1} + 1 + \frac{1}{C} \leq \frac{K}{1-\al} + 2\bra{\al+1} \leq \frac{K+4}{1-\al}.\]
Consequently, we have
\[\log\frac{\abs{A+A}}{\abs{A}} \geq \frac{\bra{1-\al}\log 2 \log \abs{A}}{2 \bra{K+4}\log M}.\]
Exponentiating proves the result with
\[c = \frac{\log 2}{2\bra{K+4}}.\]
    \smallbreak
It remains to consider $m=0$. If $\abs A=1$, the result is immediate, so suppose that $\abs A\geq2$. Since $N<q$, Ahlfors--David regularity at radius $N$ gives
\[\abs A\leq MN^\al\leq Mq^\al.\]
Using $\log q\leq(C+1)\log M$, we obtain
\[\log\abs A\leq\bra{1+\al(C+1)}\log M.\]
By our choice of $C$,
\[1+\al(C+1)\leq1+\frac{K}{1-\al}+2\al \leq\frac{K+3}{1-\al}.\]
Consequently,
\[\frac{1-\al}{\log M}\log\abs A\leq K+3.\]
On the other hand, $\abs{A+A}\geq2\abs A-1$ and $\abs A\geq2$ imply
\[\log\frac{\abs{A+A}}{\abs A}\geq\log(3/2) \geq\frac{\log(3/2)}{K+3} \frac{1-\al}{\log M}\log\abs A.\]
Since
\[\frac{\log(3/2)}{K+3}\geq \frac{\log2}{2(K+4)},\]
exponentiating gives
\[\abs{A+A}\geq\abs A^{1+\frac{c(1-\al)}{\log M}}\]
with $c = \frac{\log2}{2(K+4)}$, completing the proof.
\end{proof}

\section{From discrete to continuous}
We now deduce the continuous formulation \thref{continuous} from the discrete theorem \thref{main}. Unlike the multiscale discretisation used by Dyatlov and Zahl \cite[Section 6.4]{dyatlov2016spectral}, only the scale $\de$ is needed here, since the multiscale analysis has already been carried out in the proof of \thref{main}.

\begin{proof}[Proof of \thref{continuous}.]
With $X$ as in \thref{continuous}, let 
\[A=\{k\in\Z:X\cap[k\de,(k+1)\de)\neq\emptyset\}.\]
Since $X\sub[0,1]$, \[A\sub\{0,\ldots,\lfloor\de^{-1}\rfloor\}.\]
Set 
\[N=\lfloor\de^{-1}\rfloor+1,\] 
so that $A+1\sub[N]$.
\smallbreak
For each $k\in A$, choose $x_k\in X\cap[k\de,(k+1)\de)$.  We first prove the upper Ahlfors--David bound. Fix $a\in A$ and $1\leq R\leq N$. For every $k\in A\cap[a-R,a+R]$, 
\[\mu(B(x_k,\de))\geq C^{-1}\de^\al.\]
If $y\in B(x_k,\de)$, then the interval $[k\de,(k+1)\de)$ meets $[y-\de,y+\de]$, and an interval of length $2\de$ meets at most three intervals of length $\de$. Thus every point belongs to at most three of these balls.
\smallbreak
Moreover, if $\abs{k-a}\leq R$, then 
\[\abs{x_k-x_a}\leq(R+1)\de,\] 
so 
\[B(x_k,\de)\sub B(x_a,(R+2)\de).\]
The upper Ahlfors--David bound also holds for radii larger than $1$, since $X\sub B(x_a,1)$ and $\mu$ is supported on $X$, and so
\[\mu(X)=\mu(B(x_a,1))\leq C,\]
and hence for $r \geq 1$,
\[\mu\bra{B\bra{x_a,r}} \leq C \leq Cr^{\al}.\]
Therefore,
\[\abs{A\cap[a-R,a+R]}C^{-1}\de^\al\leq3C(R+2)^\al\de^\al.\]
Since $R\geq1$ and $\al<1$, 
\[(R+2)^\al\leq3R^\al,\] 
and therefore 
\[\abs{A\cap[a-R,a+R]}\leq9C^2R^\al.\]
\smallbreak
For the reverse inequality, suppose first that $R\geq2$. Since $R\leq N$, 
\[\de\leq\frac{R\de}{2}\leq1.\]
Every point $x\in X\cap B(x_a,R\de/2)$ lies in some interval $[k\de,(k+1)\de)$ with $k\in A\cap[a-R,a+R]$. Indeed, 
\[\abs{k-a}\de\leq\abs{x-x_a}+\de\leq R\de.\]
Each interval $[k\de,(k+1)\de)$ has $\mu$-measure at most $C\de^\al$, and hence 
\[C^{-1}(R\de/2)^\al\leq C\de^\al\abs{A\cap[a-R,a+R]}.\]
It follows that 
\[\abs{A\cap[a-R,a+R]}\geq\frac{1}{2C^2}R^\al.\]
When $1\leq R<2$, the same lower bound follows from $a\in A\cap[a-R,a+R]$. Thus $A$ is $(\al,M)$-Ahlfors--David regular with $M=9C^2$.
\smallbreak
The intervals $[k\de,(k+1)\de)$, $k\in A$, cover $X$, so 
\[N_\de(X)\leq\abs A.\]
Conversely, any interval of length $\de$ meets at most three of these intervals, and therefore 
\[\frac{1}{3}\abs A\leq N_\de(X).\]
\smallbreak
Now let $s\in A+A$, say $s=k+\ell$. Then 
\[x_k+x_\ell\in \bra{X+X}\cap[s\de,(s+2)\de).\]
An interval of length $\de$ can meet $[s\de,(s+2)\de)$ for at most three values of $s$. Hence 
\[\abs{A+A}\leq3N_\de(X+X).\]
\smallbreak
Applying \thref{main} to the translate of $A$, 
\[N_\de(X+X)\geq\frac13\abs{A+A}\geq\frac13\abs A^{1+\frac{c\bra{1-\al}}{\log(9C^2)}}.\]
Since $C\geq2$, 
\[\log(9C^2)\leq6\log C.\]
After replacing $c$ by $c/6$ and using $\abs A\geq N_\de(X)$, we obtain 
\[N_\de(X+X) \geq \frac13N_\de(X)^{1+\frac{c\bra{1-\al}}{\log C}}.\] 
\end{proof}

\appendix
\section{The digit construction of Dyatlov--Zahl}
We describe explicitly the digit construction from \cite[Section 6.8.3]{dyatlov2016spectral}, which is stated as an obstruction for the additive energy, and show that it also gives the required sumset obstruction.
\smallbreak
Fix integers $b \geq 2$ and $k \geq 1$, let $N = b^{2k}$ and let
\[A = \left\{1 + \sum_{j=0}^{k-1} a_j b^{2j} : 0 \leq a_j < b\right\} \sub [N].\]
In other words, $A-1$ is the set of non-negative integers less than $N$ which have a zero in their odd digits in base $b$. We first show that $A$ is Ahlfors--David regular with dimension $\al = \frac{1}{2}$ and regularity constant $3b$.
\smallbreak
For each $0 \leq j \leq k$, we can partition $\R$ into intervals of the form
\[[1+nb^{2j}, 1 + \bra{n+1}b^{2j}), \qquad n \in \Z.\]
Each interval either contains no points of $A$, or exactly $b^j$ points of $A$ - the choice of interval fixes all but the final $j$ digits in base $b^2$, which can then be chosen freely. Fixing some $a \in A$, the interval $\sqbra{a-b^{2j},a+b^{2j}}$ contains the interval of the form $[1+nb^{2j}, 1 + \bra{n+1}b^{2j})$ which contains $a$, and hence contains at least $b^j$ elements of $A$. On the other hand, at most three intervals of the form $[1+nb^{2j}, 1 + \bra{n+1}b^{2j})$ can intersect with $\sqbra{a-b^{2j},a+b^{2j}}$, and so this interval contains at most $3b^j$ elements of $A$. Therefore,
\[b^j \leq \abs{A \cap \sqbra{a-b^{2j},a+b^{2j}}} \leq 3b^j.\]
Now let $1 \leq R \leq N$, and choose $0 \leq j < k$ such that
\[b^{2j} \leq R \leq b^{2j+2}.\]
By containment of the corresponding intervals,
\[b^j \leq \abs{A \cap \sqbra{a-R,a+R}} \leq 3b^{j+1}.\]
Since $b^j \leq R^{1/2} \leq b^{j+1}$, we have
\[\frac{\sqrt{R}}{b} \leq \abs{A \cap [a-R,a+R]} \leq 3b \sqrt{R}.\]
Hence $A$ is $\bra{\frac{1}{2},3b}$-Ahlfors--David regular, uniformly in $k$.
\smallbreak
We now compare the size of $A$ to the size of $A+A$. First, note that $\abs{A}=b^k$, since there are $b$ choices for each of the $k$ digits $a_0, \ldots a_{k-1}$. On the other hand,
\[A+A=\left\{2 + \sum_{j=0}^{k-1} a_j b^{2j} : 0 \leq a_j \leq 2b-2\right\},\]
and hence
\[\abs{A+A}=\bra{2b-1}^k \leq \bra{2b}^k = \abs{A}^{1+\frac{\log 2}{\log b}}.\]
Using $M = 3b$, we have
\[\frac{\log 2}{\log b} \leq \frac{\log 6}{\log M},\]
and therefore
\[\abs{A+A} \leq \abs{A}^{1+\frac{\log 6}{\log M}}.\]
\bigbreak
For other values of $0<\al < 1$, a similar construction works by ensuring that exactly $\ceil{j\al}$ of the lowest $j$ digits are free, and the rest are zero.

\printbibliography
\end{document}